\newtheorem{theorem}{Theorem}
\newtheorem{proposition}{Proposition}
\newtheorem{lemma}{Lemma}
\newtheorem{remark}{Remark}
\def\qed{ \rule{.08in}{.08in}}
\newcommand{\diag}{\operatorname{diag}}
\newcommand{\Diag}{\operatorname{Diag}}
\newcommand{\R}{\mathbb{R}}
\newcommand{\U}{\mathrm{Uni}}
\newcommand{\conv}{\operatorname{conv}}
\newcommand{\cX}{\mathcal{X}}
\newcommand{\cA}{\mathcal{A}}
\newcommand{\bfo}{{\bf 1}}
\newcommand{\rL}{{\rm L}}
\newcommand{\cI}{{\cal I}}
\newcommand{\cU}{{\cal U}}
\newcommand{\rint}{\operatorname{int}}
\newtheorem{definition}{Definition}
\def\blfootnote{\xdef\@thefnmark{}\@footnotetext}
\def\BibTeX{{\rm B\kern-.05em{\sc i\kern-.025em b}\kern-.08em
    T\kern-.1667em\lower.7ex\hbox{E}\kern-.125emX}}
\begin{document}
\title{On the $H$-property for step-graphons \\and edge polytopes}

\author{M.-A. Belabbas\thanks{M.-A. Belabbas and T.~Ba\c{s}ar are with the Coordinated Science Laboratory, University of Illinois, Urbana-Champaign. Email: \texttt{\{belabbas,basar1\}@illinois.edu.}}
, X. Chen\thanks{X. Chen is with the Department of Electrical, Computer, and Energy Engineering, University of Colorado Boulder. Email: \texttt{xudong.chen@colorado.edu}.}, T.~Ba\c{s}ar$^*$
}
\date{}

\maketitle
\thispagestyle{empty} % Removes the page number in the first
\

\begin{abstract}
\blfootnote{The first two authors contributed equally to the manuscript in all categories.}
Graphons $W$ can be used as stochastic models to sample graphs~$G_n$ on~$n$ nodes for~$n$ arbitrarily large. A graphon~$W$ is said to have the $H$-property if $G_n$ admits a decomposition into disjoint cycles with probability one as $n$ goes to infinity. Such a decomposition is known as a Hamiltonian decomposition.
In this paper, we provide necessary conditions for the $H$-property to hold. The proof builds upon a hereby established  connection between the so-called edge polytope of a finite undirected graph associated with $W$ and the $H$-property. Building on its properties, we provide a purely geometric solution to a random graph problem. 
More precisely, we assign two natural objects to $W$, which we term concentration vector and skeleton graph, denoted by $x^*$ and $S$ respectively. We then establish two necessary conditions for the $H$-property to hold: (1) the edge-polytope of $S$, denoted by $\cX(S)$, is of full rank, and (2) $x^* \in \cX(S)$.
\end{abstract}

\section{Introduction}\label{sec:introduction}
Graphons, a portemanteau of graph and functions, have been recently introduced~\cite{lovasz2006limits, borgs2008convergent} to study very large graphs. A graphon can be understood as both the limit object of a convergent  sequence, where convergence is in the cut-norm~\cite{frieze1999quick}, of graphs of increasing size, and as a statistical model from which to sample random graphs. Taking this latter point of view, we investigate in this paper the so-called $H$-property (see Definition~\ref{def:Hproperty} below) for graphons.

A graphon is a symmetric, measurable function $W: [0,1]^2\to [0,1]$. It gives rise to a stochastic model for undirected graphs on $n$ nodes, denoted by $G_n \sim W$:  
 
\vspace{.2cm}

\noindent 
{\bf Sampling procedure}: Let $\U[0,1]$ be the uniform distribution on $[0,1]$. Given a graphon $W$, a graph $G_n=(V,E)~W$ on  $n$ nodes sampled from $W$  is obtained as follows: 
\begin{enumerate}
    \item Sample $y_1,\ldots,y_n\sim \U[0,1]$ independently. 
    We call $y_i$ the {\em coordinate of node} $v_i\in V$. 

    \item For any two distinct nodes $v_i$ and $v_j$, place an edge $(v_i,v_j) \in E$ with probability $W(y_i,y_j)$.
\end{enumerate}
Note that if $0\leq p\leq 1$ is a {\em constant} and $W(s,t)=p$ for all $(s,t)\in [0,1]^2$, then $G_n \sim W$ is nothing but an Erd\"os-R\'enyi random graph with parameter~$p$. Thus, graphons can be seen, in a sense, as a way to introduce {\em inhomogeneity} of edge densities between different pairs of nodes, and thus increase greatly the type of random graphs one can model. However, all large graphs sampled from (non-zero) graphons have the property of being dense~\cite{lovasz2012large}.
\vspace{.2cm}

\noindent
{\bf $H$-property:}  
Let $W$ be a graphon and $G_n \sim W$. 
In the sequel, we use the notation $\vec G_n=(V,\vec E)$ to denote the {\em directed} version of $G_n$, defined by the edge set
$$\vec E :=\{v_iv_j, v_jv_i \mid (v_i,v_j) \in E \}.$$ 
In words, we replace an undirected edge $(v_i,v_j)$ with two directed edges $v_iv_j$ and $v_jv_i$. %(if $v_i = v_j$ and, hence, $(v_i,v_j)$ is a self-loop, then replace it with only one directed self-loop $v_iv_i$). 

The directed graph $\vec G_n$ is said to have a {\em Hamiltonian decomposition} if it contains a subgraph $\vec H =  (V, \vec E')$, with the same node set, such that $\vec H$ is a disjoint union of directed cycles. With the preliminaries above, we now have the following definition:  
\vspace{.1cm}

\begin{definition}[$H$-property]\label{def:Hproperty}
Let $W$ be a graphon and $G_n\sim W$. 
Then, $W$ has the $H$-property if 
$$
\lim_{n\to\infty}\mathbb{P}(\vec G_n \mbox{ has a Hamiltonian decomposition}) = 1.
$$
\end{definition}
\vspace{.1cm}

We let $\mathcal{E}_n$ be the event that $\vec G_n$ has a Hamiltonian decomposition. The above definition implicitly requires the sequence $\mathbb{P}(\mathcal{E}_n)$ to converge. We mention here   that for almost all graphons, this sequence converges and, moreover, it converges to either $1$ or $0$. In other words, the $H$-property is a ``zero-one'' property. This fact is, however, beyond the scope of this  paper and will be proven in a forthcoming publication.

The $H$-property is central in the study of structural stability of linear systems~\cite{belabbas_algorithmsparse_2013,belabbas2013sparse} and structural controllability of linear ensemble systems~\cite{chen2021sparse}.  
Indeed, in~\cite{belabbas_algorithmsparse_2013,belabbas2013sparse}, the question of structural stability of linear systems, i.e., of whether a sparsity pattern of matrices contains a stable (Hurwitz) matrix was considered. Using the standard isomorphism between sparsity patterns of square matrices and directed graphs (stemming from interpreting the sparsity pattern as an adjacency matrix) necessary and sufficient conditions were derived on the associated graphs. These conditions required the existence of subgraphs that contained Hamiltonian decompositions. 
In~\cite{chen2021sparse}, the author considered continuum ensembles of sparse linear control systems where the individual systems share a common sparsity pattern, represented by a digraph as above, and characterized the digraphs that can sustain ensemble controllability. 
A complete solution was provided for the case where the parameterization spaces of the ensembles  are closed intervals. In particular, it is shown that the subgraph of the state-nodes needs to have a Hamiltonian decomposition.  

In this paper, we take the first step in our investigation of the $H$-property by focusing on a special class of graphons, which we term step-graphons (the same objects have also been investigated in~\cite{gao2019graphon}). 
Roughly speaking, $W$ is a step-graphon $W$ if one can divide the interval $[0,1]$ into  subintervals $\mathcal{I}_1,\ldots,\mathcal{I}_q$ so that $W$ is constant when restricted to every rectangle  $\mathcal{I}_i\times \mathcal{I}_j$. A more precise definition can be found in Definition~\ref{def:stepgraphon}. Step-graphons are a particular case of the class of step-function graphons introduced in~\cite{lovasz2011finitely}, where the partitioning is into measurable subsets of $[0,1]$. The main contribution of this paper is to obtain {\em necessary} conditions, formulated in Theorem~\ref{thm:main}, for an arbitrary step-graphon to have the $H$-property.  

The key observation underlying the proof is a connection between the $H$-property and polytopes. The study of graphs obtained from polytopes has a long tradition in discrete geometry~\cite{steinitz1928isoperimetrische} but,  later, insights into graph theoretic notions have been obtained from polytopes derived from the  graphs~\cite{ohsugi1998normal}. Our contributions in this paper fall closer to  the latter category: we draw a conclusion about a graphon $W$ from a polytope associated with it. 
The polytope of interest here is the so-called edge polytope~\cite{ohsugi1998normal}; see Definition~\ref{def:edgepolytope}.

This edge polytope appears naturally when seeking characteristics of a step-graphon relevant to whether it has the $H$-property or not.  The first object we exhibit in this vein is the {\em concentration vector} of a step-graphon $W$, denoted by $x^*$, 
the entries of which are the lengths of the subintervals $\mathcal{I}_i$. These entries are also  the probabilities that a random variable $y \sim \operatorname{Uni}[0,1]$ belongs to $\mathcal{I}_i$ (see the first item of the sampling procedure).  
The second object assigned to a step-graphon is its {\em  skeleton graph} $S$, which can be construed as representing the adjacency relations between  rectangles $\mathcal{I}_i\times \mathcal{I}_j$ where the step-graphon is non-zero (see Definition~\ref{def:skeleton}). 
The above mentioned  polytope is then the edge-polytope of the  skeleton graph; we denote it by $\cX(S)$. The two necessary conditions we exhibit in Theorem~\ref{thm:main} are as follows: (1) $S$ has an odd cycle (i.e., a cycle with an odd number of nodes/edges) or, equivalently, $\cX(S)$ has maximal rank,  and (2) $x^*\in \cX(S)$.  

\vspace{.2cm}

\noindent
{\bf Literature review:} 
In recent years, graphons have been used as models for large networks in control and game theory. For control, we mention~\cite{gao2019graphon}; there, the authors consider infinite-dimensional linear control systems $\dot x = A x + B u$, where $x$ and $u$ are elements in $\rL^2([0,1],\R)$ and $A$ and $B$ are bounded linear operators on $\rL^2([0,1],\R)$, obtained by adding scalar multiples of identity operators to graphons. For this class of systems they investigate, among others,  the associated controllability properties and finite-dimensional approximations. For game theory, we mention~\cite{gao2021linear,parise2021analysis} where the authors introduce  different types of graphon games; broadly speaking, these are the games that comprise a continuum of agents (over the closed interval $[0,1]$) with relations between these agents described by a graphon. They then proceed to investigate, among others, the existence of Nash equilibria and properties of finite-dimensional approximations. Finally, the prevalence of the Hamiltonian decompositions was also investigated for Erd\"os-R\'enyi random graphs in~\cite{belabbas2020structural}.

\vspace{.1cm}

\noindent
{\bf Notations and terminology:}   
For $v = (v_1,\ldots, v_n)\in \R^n$, we let $\Diag(v)$ be the $n\times n$ diagonal matrix whose $ii$th entry is $v_i$. We use $\bfo$ to denote the vector whose entries are all ones, and with dimension appropriate for the context.

For $S=(U,F)$, an undirected graph, without multi-edges but possibly with self-loops, we let $\vec S$ be the {\em directed} version of $S$, as defined above, but if  $(u_i,u_i)$ is a self-loop on node $u_i\in U$, then we replace it with a single self-loop $u_iu_i$.

\begin{figure}
    \centering
    \subfloat[\label{sfig1:Ggraph}]{
\includegraphics{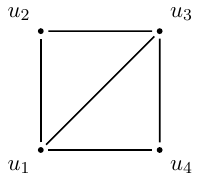}
}
\qquad
\subfloat[\label{sfig1:vecG}]{
\includegraphics{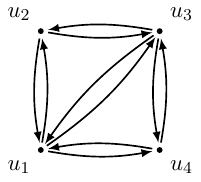}

}
\caption{{\em Left:} An undirected graph $G$ on $4$ nodes with self-loops on $u_2$ and $u_4$. {\em Right:} The directed graph $\vec G$ obtained from $G$. There are several different Hamiltonian decompositions in $\vec G$. For example, the cycle $C_1$ with edge set $\{u_1u_2,u_2u_3,u_3u_4,u_4u_1\}$  forms a Hamiltonian decomposition of $\vec G$. Similarly, the two cycles $C_2$ and $C_3$ with edge sets $\{u_1u_2,u_2u_1\}$ and $ \{u_3u_4,u_4u_3\}$, respectively,  also form a Hamiltonian decomposition of~$\vec G$. }
    \label{fig:hamildecom}
\end{figure}

Given a directed graph $\vec G = (V, \vec E)$ on $n$ nodes without self-loops, the Laplacian matrix $L = [L_{ij}]$ associated with $\vec G$ is the $n\times n$ infinitesimally row stochastic matrix with  off-diagonal entries $L_{ij}$ given by  $L_{ij} = 1$ if $v_iv_j$ is an edge of $\vec G$ and $L_{ij} = 0$ otherwise, and with  diagonal entries picked so that the row sums of $L$ are all $0$, i.e., $L\bfo = 0$.   

For positive integers $\ell, q$, and a set of vectors $z_1,\ldots, z_\ell \in \R^q$, we denote by $\conv\{z_1,\ldots,z_\ell\}$ their {\em convex hull}: 
$$\conv\{z_1,\ldots,z_\ell\}:=\left \{\sum_{i=1}^\ell \lambda_i z_i \mid \sum^\ell_{i = 1}\lambda_i = 1 \mbox{ and }\lambda_i \geq 0\right \}.$$ 
%By construction, it is a finitely generated convex set. 

\section{Preliminaries and Main Result}
In this section, we start by defining step-graphons, in Subsection~\ref{ssec:graphons},  and then their associated skeleton graphs and concentration vectors, in Subsection~\ref{ssec:concenandskel}. Then, in Subsection~\ref{ssec:mainresult}, we present  the main result of the paper. 

\subsection{Step-graphons}\label{ssec:graphons}
We have the following definition:
\vspace{.1cm}

\begin{definition}[Step-graphon and its partition]\label{def:stepgraphon}
We call a graphon $W$ a {\bf step-graphon} if there exists an increasing sequence $0 = \sigma_0 < \sigma_1< \cdots < \sigma_q = 1$ such that $W$ is constant over each rectangle $[\sigma_{i}, \sigma_{i + 1})\times [\sigma_{j}, \sigma_{j + 1})$ for all $0\leq i, j\leq q-1$ (there are $q^2$ rectangles in total).  The sequence $ \sigma = (\sigma_0,\sigma_1,\ldots,\sigma_q)$ is called a {\bf partition for $W$}.% We use $|\sigma|:=q$ to denote the cardinality of the partition.
\end{definition}
\vspace{.1cm}

\begin{remark}\normalfont
If $W$ is a step-graphon, then there exists an infinite number of compatible partitions  for $W$. %such that $W$ is constant when restricted to every $[\sigma_{i}, \sigma_{i + 1})\times [\sigma_{j}, \sigma_{j + 1})$ for $0\leq i \leq q-1$. 
Indeed, given any partition $\sigma$ for the step-graphon $W$, the partition $\sigma'$ obtained from $\sigma$ by inserting $\sigma'_i$, for any $\sigma_i < \sigma'_i < \sigma_{i+1}$,  is also a partition for $W$.\hfill \qed
\end{remark}
\vspace{.1cm}

We provide an example of a step-graphon in Fig.~\ref{fig:stepgraphon}. Note that a graph $G_n$ sampled from a step-graphon could be seen as a graph sampled from the so-called stochastic block-model~\cite{holland1983stochastic}, but with a random assignment of the nodes to the~$q$ communities with a multinomial distribution determined by the partition sequence. 

Throughout the paper, we let $n_i(G_n)$ be the number of nodes $v_j$ of $G_n$ whose coordinates $y_j \in [\sigma_{i-1},\sigma_i)$ (see item~1 of the sampling procedure in Section~\ref{sec:introduction}). When $G_n$ is clear from the context, we simply write $n_i$.

\begin{figure}
    \centering
    \subfloat[\label{sfig1:stepgraphon}]{
\includegraphics{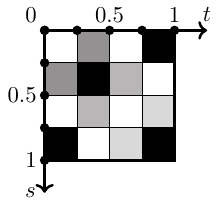}

}
\qquad
\subfloat[\label{sfig1:skeleton}]{
\includegraphics{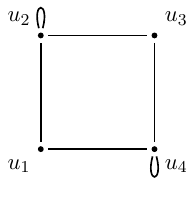}
}
\caption{ {\em Left:} A step-graphon $W$ with the partition sequence $\sigma=(0,0.25,0.5,0.75,1)$. {\em Right:} The associated skeleton graph $S(W)$.}
    \label{fig:stepgraphon}
\end{figure}

\subsection{Concentration vectors and skeleton graphs}\label{ssec:concenandskel}

In this subsection, we introduce  three key objects associated with a step-graphon; namely, its concentration vector, skeleton graph, and the so-called edge polytope of the skeleton graph.   

\vspace{.1cm}

\noindent
{\bf Concentration vector.}  
We have the following definition:
\vspace{.1cm}
\begin{definition}[Concentration vector]
    Let $W$ be  a {step-graphon} with partition $\sigma = (\sigma_0,\ldots,\sigma_q)$. 
    The associated {\bf concentration vector} $x^* = (x^*_1,\ldots, x^*_q)$ has entries  defined as follows: 
    $x^*_i := \sigma_i - \sigma_{i-1}$, for all $i = 1,\ldots, q$. 
\end{definition}
\vspace{.1cm}

There is a one-to-one correspondence between concentration vectors and partition sequences for $W$. 
% When the partition $\sigma$ is clear from the context, we will suppress it and simply write $x^*$.
We further define the  {\em empirical concentration vector} of a graph $G_n \sim W$:
\begin{equation}\label{eq:defecv}x(G_n):= \frac{1}{n}(n_1(G_n),\ldots, n_q(G_n)).
\end{equation}
whose  name is justified by the following observation:  $nx(G_n) = (n_1(G_n),\ldots, n_q(G_n))$ is a multinomial random variable with $n$ trials and $q$ events with probabilities $x^*_i$, for $1 \leq i \leq q$. A straightforward application of Chebyshev's inequality yields that for any $\epsilon>0$, 
\begin{equation}\label{eq:oldlemma1}\mathbb{P}(\|x(G_n)-x^*\| > \epsilon)\leq \frac{c}{n^2\epsilon^2},
\end{equation} 
where $c$ is some constant independent of~$\epsilon$ and~$n$. When $G_n$ and $\sigma$ are clear from the context, we will suppress them and simply write~$x$. 
\vspace{.1cm}

\noindent
{\bf Skeleton graph.} 
A  partition sequence $\sigma$ of a step-graphon $W$ induces a partition of the node set of any $G_n \sim W$ according to which of the intervals $[\sigma_{i-1},\sigma_i)$ the coordinate $y_j$ (of the sampling procedure) of $v_j$ belongs.
Elaborating on this, we can in fact construct a graph which encompasses most of the  relevant characteristics of a step-graphon:
\vspace{.1cm}

\begin{definition}[Skeleton graph]\label{def:skeleton}
To a step-graphon $W$ with a partition sequence $\sigma = (\sigma_0,\ldots, \sigma_q)$, we assign the  undirected graph $S = (U, F)$ on $q$ nodes, with $U =\{u_1,\ldots, u_q\}$ and edge set $F$ defined as follows: there is an edge between $u_i$ and $u_j$ if and only if $W$ is non-zero over $[\sigma_{i-1},\sigma_i)\times [\sigma_{j - 1}, \sigma_j)$. 
We call $S$ the {\bf skeleton graph} of $W$ for the partition sequence $\sigma$.
\end{definition}
\vspace{.1cm}

%When $W$ is clear from the context, we will omit it and simply write $S$. 

We decompose the edge set of $S$ as $F=F_0 \cup F_1$, where elements of $F_0$ are self-loops, and elements of $F_1$ are edges between distinct nodes.

Let $\mathcal{I} := \{1,\ldots,|F|\}$ be the index set for~$F$. Let
$\mathcal{I}_0$ (resp. $\mathcal{I}_1$) index the self-loops (resp. edges between distinct nodes) of $S$:  $f_i \in F_0$ for $i \in \cI_0$ (resp. $f_i\in F_1$ for $i \in \cI_1$).

Given a step-graphon $W$ and a skeleton graph $S$, we can naturally introduce a graph homomorphism assigning the nodes of an arbitrary $G_n = (V, E)\sim W$ to their corresponding  nodes in $S$: 
\begin{equation}\label{eq:defpi}
\pi:  v_j\in V \mapsto \pi(v_j) = u_i\in U,
\end{equation}
where $u_i$ is such that $\sigma_{i-1} \leq y_j < \sigma_{i}$, with $y_j$ the coordinate of $v_j$. It should be clear that $n_i(G_n) = |\pi^{-1}(u_i)|$ for all $i = 1,\ldots,q$.

\vspace{0.1cm}

\noindent 
{\bf Edge polytope of a skeleton graph.} To introduce the polytope, we start with the following definition: 
\vspace{.1cm}

\begin{definition}[Incidence matrix]\label{defnodeedgeinc}
Let $S=(U,F)$ be a skeleton graph. Given an arbitrary ordering of its edges and self-loops, we let $Z =[z_{ij}]$ be the associated {\em incidence matrix}, defined as the $|U| \times |F|$ matrix with entries:
\begin{equation}\label{eq:defZS}
z_{ij} := \frac{1}{2}
\begin{cases}
    2, & \text{if } f_j\in F_0 \text{ is a loop on node } u_i,       \\
    1, & \text{if node } u_i \text{ is incident to } f_j\in F_1, \\
    0, & \text{otherwise}.
\end{cases}
\end{equation}\,
\end{definition}

Owing to the factor $\frac{1}{2}$ in~\eqref{eq:defZS}, all columns of $Z$ are probability vectors,
i.e., all entries are nonnegative and sum to one. The edge polytope of $S$ was introduced in~\cite{ohsugi1998normal} and is reproduced below (with slight difference in inclusion of the factor $\frac{1}{2}$ of the generators $z_j$):
\vspace{.1cm}

\begin{definition}[Edge polytope]\label{def:edgepolytope}
Let $S = (U,F)$ be a skeleton graph and $Z$ be the associated incidence matrix. Let $z_j$, for $1\leq j \leq |F|$, be the columns of $Z$.  
The {\em edge polytope} of $S$, denoted by $\cX(S)$, is the finitely generated convex hull: 
\begin{equation}\label{eq:defXS}
\cX(S):= \conv\{z_j \mid j = 1,\ldots, |F|\}.
\end{equation}\,
\end{definition}

Because each $z_j$ is a probability vector and $\cX(S)$ is a convex hull spanned by these vectors, $\cX(S)$ is a subset of the standard simplex in $\R^{q}$. We provide below relevant properties of this set.

We first describe $\cX(S)$ by characterizing its extremal generators. Recall that $x$ is an extremal point of $\cX(S)$ if there is no line segment in $\cX(S)$ that contains $x$ in its interior. Then, the maximal set of extremal points is the set of extremal generators for $\cX(S)$. Because $\cX(S)$ is generated by the columns of $Z$, the set of extremal generators is necessarily a subset of the set of these column vectors. To characterize it further, we let $\cI_2 \subseteq \cI_1$ index the edges of $S$ that are {\em not} incident to two self-loops. We then have
\vspace{.1cm}

\begin{proposition}\label{lem:extremalgeenrator}
The set of extremal generators of $\cX(S)$ is 
$\{z_i \mid i\in \cI_0 \cup \cI_2\}$. 
\end{proposition}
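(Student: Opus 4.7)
The plan is to first write down the columns $z_j$ explicitly: if $f_j$ is a self-loop at $u_a$ then $z_j = e_a$ (the $a$-th standard basis vector), while if $f_j$ connects distinct nodes $u_a$ and $u_b$ then $z_j = \tfrac{1}{2}(e_a+e_b)$. With this in hand, the proposition reduces to showing (i) every $z_j$ with $j\in\cI_1\setminus\cI_2$ is a non-trivial convex combination of two other columns, and (ii) every $z_j$ with $j\in\cI_0\cup\cI_2$ admits only the trivial representation as a convex combination of columns of $Z$.

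For (i), suppose $j\in\cI_1\setminus\cI_2$, so $f_j=(u_a,u_b)$ with both $u_a$ and $u_b$ carrying self-loops, say $f_{j_a}$ and $f_{j_b}$. Then $z_{j_a}=e_a$ and $z_{j_b}=e_b$ are distinct generators, and
\[
z_j=\tfrac{1}{2}e_a+\tfrac{1}{2}e_b=\tfrac{1}{2}z_{j_a}+\tfrac{1}{2}z_{j_b},
\]
exhibiting $z_j$ as the midpoint of a segment in $\cX(S)$, hence non-extremal.

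For (ii), suppose $z_i=\sum_j \lambda_j z_j$ with $\lambda_j\ge 0$ and $\sum_j\lambda_j=1$. I will pick, for each case, a coordinate $a$ such that the $a$-th entry of $z_i$ is maximal among the $a$-th entries of all $z_j$'s, and then use extremality of the simplex inequality along that coordinate. When $i\in\cI_0$ is the self-loop at $u_a$, the $a$-th coordinate of $z_i$ equals $1$, while the $a$-th coordinates of every other column lie in $\{0,\tfrac{1}{2}\}$; since $\sum_j\lambda_j=1$, equality $\sum_j\lambda_j z_{ja}=1$ forces $\lambda_j=0$ unless $z_{ja}=1$, and the only such column is $z_i$ itself (no multi-edges). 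When $i\in\cI_2$, write $f_i=(u_a,u_b)$ with, say, $u_a$ having no self-loop. Then the $a$-th coordinate of every column is in $\{0,\tfrac{1}{2}\}$ (the value $1$ would require a self-loop at $u_a$), and the identity $\sum_j\lambda_j z_{ja}=\tfrac{1}{2}$ together with $\sum_j\lambda_j=1$ forces $\lambda_j=0$ for every column with $z_{ja}=0$, so only edges incident to $u_a$ survive. Comparing the $c$-th coordinate for $c\neq a,b$ then kills every remaining edge $(u_a,u_c)$ with $c\neq b$, leaving only the unique edge $f_i=(u_a,u_b)$, so $\lambda_i=1$.

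The argument is mostly a routine coordinate inspection; the main subtlety is the correct identification of $\cI_2$ as the right index set. The definition of $\cI_2$ (edges not incident to \emph{two} self-loops) is precisely what guarantees, in case (ii), the existence of an endpoint $u_a$ without a self-loop, which in turn enables the coordinate bound $z_{ja}\leq 1/2$ that drives the extremality proof. Once this is recognized, both cases fall into place with no further complication.
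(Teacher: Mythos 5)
Your proof is correct and follows essentially the same route as the paper: the midpoint identity $z_j=\tfrac12(z_{j_a}+z_{j_b})$ for edges incident to two self-loops, and a direct analysis of any convex representation to establish extremality of the remaining generators. The only difference is cosmetic: where the paper argues via support containment (any $z_j$ appearing with positive weight must have support inside that of $z_i$), you argue via saturation of the coordinate bound $z_{ja}\le\tfrac12$ at a loop-free endpoint $u_a$; both hinge on the same observation that $\cI_2$ guarantees such an endpoint exists.
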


\vspace{.1cm}

\begin{proof}
It should be clear from~\eqref{eq:defZS} that every $z_i$, for $i\in \cI_0$, is an extremal point. Next, note that if $f_i$, for $i\in \cI_1$, is incident to two self-loops, say $f_j$ and $f_k$, then $z_i = \frac{1}{2}(z_j + z_k)$ and, hence, $z_i$ is not extremal.  
It now remains to show that if $i\in \cI_2$, then $z_i$ is an extremal point.  
Suppose not; then, one can write $z_i = \sum_{j\neq i} c_j z_j$, with $c_j \ge 0$. Since $z_i$ only has two non-zero entries and since the $c_j$'s are non-negative, if the support of $z_j$ is not included in the support of $z_i$, then $c_j = 0$. It has two implications: ({\em i}) For any $j\in \cI_1 - \{i\}$, $c_j = 0$; 
({\em ii}) If $j\in \cI_0$, then the self-loop $f_j$ has to be incident to $f_i$.  
Thus, the expression $z_i = \sum_{\ell\neq i} c_\ell z_\ell$ reduces to $z_i =c_{j} z_{j}$, where $f_j$ is the self-loop incident to $z_i$ (if it exists),  which clearly cannot hold. \end{proof}

\vspace{.1cm}

We conclude this subsection with a known result~\cite{ohsugi1998normal} on the rank of $\cX(S)$ (or, similarly, a result~\cite{van1976incidence} on the rank of $Z$ introduced in Definition~\ref{defnodeedgeinc}),  where the rank of $\cX(S)$ is the dimension of its relative interior: 
\vspace{.1cm}

\begin{proposition}\label{prop:rankZSevenodd}
Let $S = (U, F)$ be a connected, undirected graph on $q$ nodes, possibly with loops. Then,  
\begin{equation}
    \operatorname{rank} \cX(S) = 
    \left\{
    \begin{array}{ll}
      q-1  & \mbox{if $S$ has an odd cycle},  \\
      q-2  & \mbox{otherwise}. 
    \end{array}
    \right.
\end{equation}
\end{proposition}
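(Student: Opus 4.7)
The plan is to reduce the statement about the dimension of the polytope $\cX(S)$ to a classical statement about the rank of the (scaled) incidence matrix $Z$. The key observation is that every column $z_j$ of $Z$ is a probability vector, so $\mathbf{1}^\top z_j = 1$ for all $j$. Translating by $-z_1$, the affine hull of $\cX(S)$ is $z_1 + \mathrm{span}\{z_j-z_1\}_{j\ge 2}$; since each difference $z_j-z_1$ lies in $\mathbf{1}^\perp$ while $z_1$ does not, one has
\[
\rank \cX(S) \;=\; \dim \mathrm{span}\{z_j-z_1\}_{j\ge 2} \;=\; \rank Z - 1.
\]
It therefore suffices to prove that $\rank Z = q$ when $S$ contains an odd cycle (including a self-loop, viewed as a cycle of length one) and $\rank Z = q-1$ otherwise.

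I would compute $\rank Z = q - \dim\ker Z^\top$ by inspecting the defining equations. Reading off~\eqref{eq:defZS}, a vector $v\in\R^q$ lies in $\ker Z^\top$ if and only if (i) $v_i+v_j=0$ for every edge $f_k\in F_1$ with endpoints $u_i,u_j$, and (ii) $v_i=0$ for every self-loop $f_k\in F_0$ at $u_i$. Condition (i) says that the entries of $v$ alternate in sign along any path of $S$, so by connectedness $v$ is determined up to a scalar once one entry is fixed.

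Suppose $S$ contains an odd cycle $u_{i_0},u_{i_1},\ldots,u_{i_{2k+1}}=u_{i_0}$. Iterating (i) around the cycle yields $v_{i_0}=(-1)^{2k+1}v_{i_0}=-v_{i_0}$, so $v_{i_0}=0$; the case of a self-loop is handled directly by (ii). In either case connectedness then forces $v=0$, so $\ker Z^\top=0$ and $\rank Z=q$. If instead $S$ has no odd cycle, then $S$ is bipartite and necessarily loop-free; letting $U=U_0\sqcup U_1$ be the bipartition, the vector $v$ with $v_i=+1$ on $U_0$ and $v_i=-1$ on $U_1$ lies in $\ker Z^\top$, and the alternation argument shows every element of $\ker Z^\top$ is a scalar multiple of it. Hence $\dim\ker Z^\top=1$ and $\rank Z=q-1$. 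Combined with the reduction above, the proposition follows.

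The main subtlety is organizational rather than conceptual: one must carefully track the factor $\tfrac{1}{2}$ in Definition~\ref{defnodeedgeinc} (it is precisely what makes the columns of $Z$ probability vectors and thus enables the clean reduction $\rank \cX(S)=\rank Z - 1$), and handle self-loops on the same footing as odd cycles of longer length, both of which fall out immediately from conditions (i)--(ii) on $\ker Z^\top$.
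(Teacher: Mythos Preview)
Your argument is correct. The reduction $\rank \cX(S)=\rank Z-1$ via the observation $\mathbf{1}^\top z_j=1$ is clean and valid, and the computation of $\ker Z^\top$ through the sign-alternation constraint along edges is the standard way to establish the incidence-matrix rank dichotomy; the treatment of self-loops as odd cycles of length one is handled correctly by condition~(ii).

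Regarding comparison with the paper: the paper does \emph{not} supply its own proof of this proposition. It is stated as a known result, with citations to Ohsugi--Hibi for the polytope formulation and to van~Nuffelen for the incidence-matrix rank. Your write-up therefore goes beyond what the paper does by actually proving the statement; the route you take (translate polytope dimension to matrix rank, then analyse $\ker Z^\top$ via parity along cycles) is precisely the classical argument underlying those cited references, so there is no methodological divergence to discuss --- you have simply unpacked the citation.
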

% \vspace{.1cm}

\subsection{Main result}\label{ssec:mainresult}
For ease of exposition,  we assume from now on that the step-graphons $W$ are such that their corresponding skeleton graphs $S$ are connected. However, all the results below hold for step-graphons $W$ whose skeleton graphs have several connected components by requiring that the conditions exhibited for $S$ hold for each connected component of $S$.

\vspace{.1cm}
\begin{theorem}\label{thm:main}
Let $W$ be a step-graphon with $\sigma$ a partition.  
Let $S$ and $x^*$ be the associated (connected) skeleton graph   and concentration vector, respectively. 
Let $G_n\sim W$ and $\vec G_n$ be the directed version of $G_n$. 
If $S$ has no odd cycle or if $x^*\notin \cX(S)$, then 
\begin{equation}\label{eq:nonHproperty}
\lim_{n\to\infty}\mathbb{P}(\vec G_n \mbox{  has a Hamiltonian decomposition}) = 0.
\end{equation}
\end{theorem}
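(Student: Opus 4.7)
The central plan is to establish a single structural lemma: whenever $\vec{G}_n$ admits a Hamiltonian decomposition, the empirical concentration vector $x(G_n)$ from \eqref{eq:defecv} must lie in the edge polytope $\cX(S)$. Granted this, the theorem follows from the concentration inequality \eqref{eq:oldlemma1} together with one additional observation in the bipartite case. To prove the lemma, fix a Hamiltonian decomposition $\vec H = (V, \vec E')$ and, for each $j \in \cI$, let $m_j$ denote the number of edges of $\vec E'$ whose endpoints project under $\pi$ (as in \eqref{eq:defpi}) to the endpoints of $f_j \in F$. Because every node of $V$ has in-degree and out-degree exactly one in $\vec H$, the total number of directed-edge endpoints of $\vec E'$ falling in $\pi^{-1}(u_i)$ equals $2 n_i(G_n)$. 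Regrouping this count by the edge class $f_j$: an edge of $\vec H$ projecting onto a self-loop at $u_i$ contributes $2$ such endpoints, while one projecting onto a non-loop incident to $u_i$ contributes $1$. These multiplicities are exactly $2 z_{ij}$ from \eqref{eq:defZS}, which gives $Zm = (n_1(G_n), \ldots, n_q(G_n))^T$. Since also $\bfo^T m = n$, the vector $m/n$ is a probability vector and therefore $x(G_n) = Z(m/n) \in \cX(S)$.

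With the structural lemma in hand, the first case---$x^* \notin \cX(S)$---is immediate: $\cX(S) \subset \R^q$ is compact, so $\epsilon := \operatorname{dist}(x^*, \cX(S)) > 0$, and every sample on which $\vec{G}_n$ has a Hamiltonian decomposition satisfies $|x(G_n) - x^*| \geq \epsilon$; by \eqref{eq:oldlemma1}, the probability of this event is $O(1/n^2) \to 0$. Suppose instead that $S$ has no odd cycle and $x^* \in \cX(S)$ (the complementary subcase being handled by the first case). Then $S$ has no self-loops---a self-loop is an odd cycle---and $S$ is bipartite with parts $A, B$. Every edge of $S$ crosses the bipartition, so each column $z_j$ of $Z$ has entries summing to $\tfrac12$ over $A$; hence every $x \in \cX(S)$ satisfies $\sum_{i \in A} x_i = \tfrac12$. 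The structural lemma therefore forces the integer random variable $N_A := \sum_{i \in A} n_i(G_n)$ to equal exactly $n/2$ on the event $\mathcal{E}_n$. Since $x^* \in \cX(S)$ implies $p_A := \sum_{i \in A} x^*_i = \tfrac12$, $N_A$ is $\mathrm{Bin}(n, \tfrac12)$, and a Stirling estimate yields
\[
\mathbb{P}(\mathcal{E}_n) \leq \mathbb{P}(N_A = n/2) \leq \binom{n}{\lfloor n/2 \rfloor}\, 2^{-n} = O(n^{-1/2}) \to 0,
\]
with the probability being zero outright when $n$ is odd.

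The conceptual heart of the proof is the structural lemma, which translates the purely combinatorial fact that a Hamiltonian decomposition exists into the affine relation $x(G_n) \in \cX(S)$. I expect this step to be the main obstacle: the factor $\tfrac12$ in \eqref{eq:defZS} is precisely what simultaneously normalizes both $x(G_n)$ and $m/n$ into probability vectors, recasting the incidence identity as a convex combination. Once this bridge is in place, both hypotheses of the theorem reduce to elementary concentration arguments on the multinomial vector $(n_1(G_n), \ldots, n_q(G_n))$; the only subtlety is the bipartite case, where the rank drop of $\cX(S)$ from Proposition~\ref{prop:rankZSevenodd} manifests itself as the codimension-one linear constraint $\sum_{i \in A} x_i = \tfrac12$, a constraint the integer-valued $N_A$ can meet only at the central binomial coefficient, whose mass decays like $n^{-1/2}$.
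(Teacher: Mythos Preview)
Your proof is correct and, in two places, takes a genuinely different route from the paper.

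\emph{The structural lemma.} The paper does not prove $x(G_n)\in\cX(S)$ directly. Instead, it introduces the set $\cA(S)$ of nonnegative matrices with the sparsity pattern of $S$ satisfying $A\bfo=A^\top\bfo$ and $\bfo^\top A\bfo=1$, shows that the normalized block-to-block edge-count matrix $\rho(\vec H)=\frac{1}{n}[n_{ij}(\vec H)]$ of any Hamiltonian decomposition lies in $\cA(S)$ with $\rho(\vec H)\bfo=x(G_n)$, and then proves the separate Proposition that $\cA(S)\bfo=\cX(S)$. That last equality requires a Birkhoff-type decomposition of infinitesimally doubly stochastic matrices into cycle Laplacians, imported from an external reference. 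Your endpoint-counting argument, producing the edge-class vector $m$ with $Zm=(n_1,\ldots,n_q)^\top$ and $\bfo^\top m=n$, bypasses $\cA(S)$ entirely and is both shorter and self-contained. The paper's detour through $\cA(S)$ does buy something elsewhere---that set is a natural object for the sufficiency direction announced after the theorem---but for the present necessity statement your route is cleaner.

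\emph{The bipartite case.} For $S$ without an odd cycle the paper invokes the multivariate CLT for the multinomial: $\sqrt{n}(x-x^*)+x^*$ converges in law to a Gaussian supported on the full affine simplex, while the affine span of $\cX(S)$ has codimension one there by Proposition~\ref{prop:rankZSevenodd}, so the limiting law assigns it mass zero and the Portmanteau lemma finishes. You instead identify the codimension-one constraint explicitly as $\sum_{i\in A}x_i=\tfrac12$ for a bipartition class $A$, reduce to the single binomial $N_A\sim\mathrm{Bin}(n,\tfrac12)$, and bound $\mathbb{P}(N_A=n/2)=O(n^{-1/2})$ by Stirling. This is more elementary, avoids the CLT entirely, and yields an explicit rate; the paper's argument is slightly more abstract but would generalize immediately to other low-rank situations without singling out a specific linear functional.
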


\vspace{.3cm}

The proof goes by showing that if one of the two conditions holds, then the edge polytope $\cX(S)$ contains {\em at most} a zero-measure subset of the support of~$x^*$. In particular, if $S$ does not have an odd cycle, then the codimension of $\cX(S)$ in the standard simplex is one (see Proposition~\ref{prop:rankZSevenodd}) and thus the probability that the vector $x^\star$ belongs to $\cX(S)$ is negligible in the asymptotic regime.

The conditions exhibited in Theorem~\ref{thm:main}  almost completely determine whether $W$ has the $H$-property:  we can show that $S$  has an odd cycle and  $x^*$ is in the {\em interior} of  $\cX(S)$, then   
$$\lim_{n\to\infty}\mathbb{P}(\vec G_n \mbox{  has a Hamiltonian decomposition}) = 1.$$
The proof of this statement is much more involved than the proof of Theorem~\ref{thm:main}, and will be presented on another occasion.

It may seem at first that the main result depends on a certain partition $\sigma$, which defines $S$ and $x^*$. We state here the following fact: 

\begin{proposition}\label{prop:invarianceofskeleton}
Let $W$ be a step-graphon. For any two partitions $\sigma$ and $\sigma'$ for $W$, let $x^*$, $x'^*$ be the corresponding concentration vectors and let $S$, $S'$ be the corresponding skeleton graphs. Then, the following hold:
\begin{enumerate}
    \item $S$ is connected if and only if $S'$ is connected;
    \item $S$ has an odd cycle if and only if $S'$ has an odd-cycle;
    \item $x^*\in \cX(S)$ (resp. $x\in\rint \cX(S)$) if and only if $x'^*\in \cX(S')$ (resp. $x'^*\in \cX(S')$).
\end{enumerate}
\end{proposition} 

The proof of the result is provided in the Appendix.

\section{Analysis and Proof of Theorem~\ref{thm:main}}
\subsection{On the edge polytope of $S$}

Let $W$ be a step-graphon with partition sequence $\sigma$ and corresponding skeleton graph $S$ on $q$ nodes. 
In this subsection, we  introduce  in Definition~\ref{def:CaS} the set $\cA(S)$ of sparse infinitesimally stochastic matrices whose sparsity pattern is determined by the skeleton graph $S$.  We then show that the edge polytope $\cX(S)$, defined by~\eqref{eq:defXS}, is exactly the set of row sums of these matrices. This expression of $\cX(S)$ will be required in the proof of Theorem~\ref{thm:main}.

We start with the following result:

\vspace{.1cm}
\begin{lemma}\label{lem:doublestochastic}
Assume that $\vec G_n$ has a Hamiltonian decomposition, denoted by $\vec H$, and let $n_{ij}(\vec H)$ be the number of edges of $\vec H$ from a node in $\pi^{-1}(u_i)$ to a node in $\pi^{-1}(u_j)$.
Then, for all $u_i \in U$, 
\begin{equation}\label{eq:solH}
n_i(G_n)=\sum_{u_j \in N(u_i)} n_{ij}(\vec H) = \sum_{u_j\in N(u_i)} n_{ji}(\vec H).
\end{equation}\,
\end{lemma}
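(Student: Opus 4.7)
The plan is to exploit the structural fact that a Hamiltonian decomposition is a vertex-disjoint union of directed cycles covering every node, so in $\vec H$ every node has both in-degree and out-degree exactly one. The equalities in \eqref{eq:solH} will then follow from double-counting the edges of $\vec H$ incident to the block $\pi^{-1}(u_i)$, grouped by their endpoints outside the block.

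First I would fix $u_i \in U$ and sum over the $n_i(G_n) = |\pi^{-1}(u_i)|$ nodes of that block. Since every node of $\vec H$ has out-degree one, the total number of out-edges of $\vec H$ with tail in $\pi^{-1}(u_i)$ equals $n_i(G_n)$. Next, I would partition these out-edges according to which block $\pi^{-1}(u_j)$ contains the head; by definition this count is $n_{ij}(\vec H)$. The only thing to check is that the index $j$ indeed ranges over $N(u_i)$: any edge of $\vec H$ is an edge of $\vec G_n$, which in turn requires $W(y,y')>0$ for the coordinates of its endpoints; since $W$ is a step-graphon, this forces $W$ to be nonzero on $[\sigma_{i-1},\sigma_i)\times[\sigma_{j-1},\sigma_j)$, which is precisely the condition that $u_j\in N(u_i)$ in $S$ (self-loops included when $i=j$). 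Thus $n_i(G_n) = \sum_{u_j\in N(u_i)} n_{ij}(\vec H)$.

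The second equality is entirely symmetric: repeating the argument with in-degrees (each node of $\vec H$ has in-degree one) yields $n_i(G_n) = \sum_{u_j\in N(u_i)} n_{ji}(\vec H)$. There is no real obstacle here; the only subtlety is bookkeeping around self-loops, where a single self-loop $u_iu_i$ in $\vec S$ must be allowed to contribute both to the outgoing and incoming counts from $\pi^{-1}(u_i)$ to itself, which is consistent with $u_i$ being its own neighbor in that case. The lemma is essentially a restatement of ``the induced Laplacian flow on the blocks is balanced,'' which is why it will serve later as the bridge from Hamiltonian decompositions to the doubly-stochastic/row-sum description of $\cX(S)$.
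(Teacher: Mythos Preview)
Your proposal is correct and follows essentially the same approach as the paper: both arguments rest on the observation that every node of $\vec H$ has in-degree and out-degree exactly one, and then count the outgoing (resp.\ incoming) edges from the block $\pi^{-1}(u_i)$ grouped by destination (resp.\ origin) block. Your write-up is slightly more explicit in verifying that the index $j$ may be restricted to $N(u_i)$ via the graph-homomorphism property of $\pi$, which the paper leaves implicit.
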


\begin{proof}
Each node of $\vec H$ has exactly one incoming edge and one outgoing edge. The result then follows from the fact that $\sum_{u_j \in N(u_i)} n_{ij}(\vec H)$ counts the number of outgoing edges from the nodes of $\pi^{-1}(u_i)$ while $\sum_{u_j \in N(u_i)} n_{ji}(\vec H)$ counts the number of incoming edges to the nodes of $\pi^{-1}(u_i)$, and the fact that $\vec H$ has the same node set as $\vec G_n$.
\end{proof}
\vspace{.1cm}

Following Lemma~\ref{lem:doublestochastic}, we  now assign to the skeleton graph $S$ a convex set that will be instrumental in the study of Hamiltonian decompositions of $\vec G_n$: 
\vspace{.1cm}

\begin{definition}\label{def:CaS}
To an arbitrary undirected graph $S = (U,F)$ on $q$ nodes, possibly with self-loops, we assign the set $\cA(S)$ of $q\times q$ nonnegative matrices $A = [a_{ij}]$ that satisfy the following two conditions: 
\begin{enumerate} 
\item if $(u_i,u_j) \notin F$, then $a_{ij}=0$;
\item $A\bfo = A^\top \bfo$, and $\bfo^\top A \bfo = 1$. 
  \end{enumerate} 
\end{definition}
\vspace{.1cm}

Note that $\bfo^\top A \bfo$ is nothing but the sum of all the entries of $A$. 
Because every defining condition for ${\cal A}(S)$ is affine, the set $\cA(S)$ is a convex set.   

Now, to each Hamiltonian decomposition $\vec H$ of $\vec G_n$,  we assign the following $q\times q$ matrix:  
\begin{equation}\label{eq:IDSM}
\rho(\vec H):= \frac{1}{n} \left[ n_{ij}(\vec H)\right]_{1\leq i,j\leq q}.
\end{equation}
The next lemma then follows immediately from Lemma~\ref{lem:doublestochastic}: 
\vspace{.1cm}

\begin{lemma}\label{lem:phiHinconvS}
If $\vec H$ is a Hamiltonian decomposition of $\vec G_n$, then $\rho(\vec H) \in \cA(S)$ and $\rho(\vec H)\bfo = x$, where $x$ is the empirical concentration vector of $G_n$. 
\end{lemma}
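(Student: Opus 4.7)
The proof is essentially a verification, and the plan is to check each of the defining properties of $\cA(S)$ in turn and then read off the row sums. The hard work has already been done in Lemma~\ref{lem:doublestochastic}; what remains is bookkeeping.

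First I would verify that $\rho(\vec H)$ is nonnegative (immediate from $n_{ij}(\vec H)\geq 0$) and that its support respects the skeleton graph. The latter is the only conceptually nontrivial point: if $(u_i,u_j)\notin F$, then by Definition~\ref{def:skeleton} the graphon $W$ vanishes identically on the rectangle $[\sigma_{i-1},\sigma_i)\times[\sigma_{j-1},\sigma_j)$, so by the sampling procedure no edge of $G_n$ can connect a node in $\pi^{-1}(u_i)$ to a node in $\pi^{-1}(u_j)$. Since $\vec H$ is a subgraph of $\vec G_n$, this forces $n_{ij}(\vec H)=0$, and hence the corresponding entry of $\rho(\vec H)$ vanishes.

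Next I would check condition 2 of Definition~\ref{def:CaS}. The $i$th entry of $\rho(\vec H)\bfo$ is $\tfrac{1}{n}\sum_{j} n_{ij}(\vec H)$, and the $i$th entry of $\rho(\vec H)^\top \bfo$ is $\tfrac{1}{n}\sum_{j} n_{ji}(\vec H)$. By Lemma~\ref{lem:doublestochastic}, both sums equal $n_i(G_n)$, so
\[
\rho(\vec H)\bfo \;=\; \rho(\vec H)^\top \bfo \;=\; \tfrac{1}{n}\bigl(n_1(G_n),\ldots,n_q(G_n)\bigr) \;=\; x,
\]
which simultaneously gives the symmetry of the row/column sums and the claimed identity $\rho(\vec H)\bfo = x$. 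For the normalization $\bfo^\top \rho(\vec H) \bfo = 1$, I would observe that the total number of edges of $\vec H$ equals $\sum_{i,j} n_{ij}(\vec H)$, and that because $\vec H$ is a disjoint union of directed cycles spanning all $n$ nodes, every node contributes exactly one outgoing edge, giving a total of $n$ edges. Dividing by $n$ yields $1$.

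I do not anticipate a real obstacle here; the only subtlety worth highlighting is the sparsity step, where one must invoke Definition~\ref{def:skeleton} (and not merely a probabilistic argument) to conclude that the absence of an edge in $S$ deterministically forces the absence of edges in $\vec G_n$, and hence in $\vec H$, between the corresponding fibers of $\pi$.
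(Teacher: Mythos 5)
Your proof is correct and follows the same route as the paper, which simply states that the lemma ``follows immediately from Lemma~\ref{lem:doublestochastic}'' without writing out the verification; you have filled in exactly the intended details (nonnegativity, the sparsity constraint from Definition~\ref{def:skeleton}, the row/column sums via Lemma~\ref{lem:doublestochastic}, and the normalization from $\vec H$ having $n$ edges).
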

\vspace{.1cm}

The relation $\rho(\vec H) \bfo= x$ in the above lemma leads us to investigate the set of the possible row sums of $A \in \cA(S)$. The main result of this subsection is that this set is  equal to~$\cX(S)$ introduced in~\eqref{eq:defXS}: 
\vspace{.1cm}

\begin{proposition}\label{prop:charDSZ}
The following holds: 
\begin{equation}\label{eq:ds}
\cX(S) = \{ x\in \R^q \mid x=A\bfo \mbox{ for some } A\in \cA(S) \}.
\end{equation}
\end{proposition}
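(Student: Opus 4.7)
My plan is to prove Proposition~\ref{prop:charDSZ} by establishing the two inclusions via an explicit correspondence between convex combinations of the generators $z_j$ and matrices in $\cA(S)$. The key observation to keep in mind throughout is that each column $z_j$ of the incidence matrix sums to $1$ (the factor $\tfrac12$ in~\eqref{eq:defZS} is exactly engineered for this), so the normalization $\bfo^\top A \bfo = 1$ in the definition of $\cA(S)$ lines up with the normalization $\sum_j \lambda_j = 1$ in the definition of the convex hull.

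For the inclusion $\cX(S) \subseteq \{A\bfo : A \in \cA(S)\}$, I would take an arbitrary $x = \sum_j \lambda_j z_j \in \cX(S)$ and construct a \emph{symmetric} matrix $A$ as follows: for each edge $f_j = (u_i, u_k) \in F_1$, set $a_{ik} = a_{ki} = \lambda_j/2$; for each self-loop $f_j \in F_0$ at $u_i$, set $a_{ii} = \lambda_j$. By construction $a_{ij} = 0$ whenever $(u_i,u_j) \notin F$, and $A$ is nonnegative and symmetric, so the stochasticity condition $A\bfo = A^\top \bfo$ is trivial. A direct computation of the row sum at node $i$ gives $\sum_j \lambda_j z_{ij} = x_i$, and the total sum of entries is $\sum_j \lambda_j (\bfo^\top z_j) = \sum_j \lambda_j = 1$.

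For the reverse inclusion, I would start with an arbitrary $A \in \cA(S)$ and, for each edge $f_j$, define a coefficient $\lambda_j$ that bundles together the two directed entries of $A$ corresponding to $f_j$: put $\lambda_j := a_{ik} + a_{ki}$ if $f_j = (u_i,u_k) \in F_1$, and $\lambda_j := a_{ii}$ if $f_j$ is a self-loop at $u_i$. The $\lambda_j$ are nonnegative, and their sum is $\bfo^\top A \bfo = 1$. The only slightly nontrivial part is to verify that $\sum_j \lambda_j z_j = A\bfo$ coordinate-wise; the $i$-th coordinate equals $\tfrac12(\sum_{k \ne i} a_{ik} + \sum_{k \ne i} a_{ki}) + a_{ii}$, which collapses to $(A\bfo)_i$ once we invoke the identity $\sum_{k \ne i} a_{ik} = \sum_{k \ne i} a_{ki}$ supplied by $A\bfo = A^\top \bfo$. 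This is the only step where the balance condition is actually used, and it is what forces the whole picture to be consistent.

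There is no real obstacle here: the argument is essentially a bookkeeping identification between edge weights of $S$ and entries of $A$, with the symmetrization/balance conditions doing precisely the work needed to absorb the discrepancy between directed and undirected edges. The one mild subtlety worth flagging is that the $\cX(S) \subseteq \cdot$ direction shows that it suffices to consider symmetric matrices in $\cA(S)$; the larger class of asymmetric $A$'s produces no extra row-sum vectors. This fact is implicit in the proof and does not need a separate argument.
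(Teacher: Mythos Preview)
Your proof is correct. The forward inclusion $\cX(S)\subseteq \cA(S)\bfo$ is essentially identical to the paper's: the paper builds, for each generator $z_j$, the matrix $A_j = e_ie_i^\top$ (self-loop case) or $A_j = \tfrac12(e_ke_\ell^\top + e_\ell e_k^\top)$ (edge case), and your $A$ is just the convex combination $\sum_j \lambda_j A_j$.

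For the reverse inclusion $\cA(S)\bfo \subseteq \cX(S)$, however, you take a genuinely different and more elementary route. The paper splits $A = A_0 + A_1$ into diagonal and off-diagonal parts, then handles $A_1$ by forming the infinitesimally doubly stochastic matrix $A_1' = A_1 - \Diag(A_1\bfo)$ and invoking an external result (an ``infinitesimal Birkhoff theorem'' from~\cite{chen2016distributed}) that decomposes $A_1'$ as a nonnegative combination of Laplacians of directed cycles of $\vec S$; it then checks that each cycle Laplacian's diagonal is a nonpositive combination of the $z_j$'s. Your argument bypasses all of this: you simply set $\lambda_j := a_{ik}+a_{ki}$ for each undirected edge $f_j=(u_i,u_k)$ and $\lambda_j:=a_{ii}$ for each self-loop, and verify directly that the $i$th coordinate of $\sum_j \lambda_j z_j$ equals $a_{ii} + \tfrac12\sum_{k\neq i}(a_{ik}+a_{ki}) = \tfrac12(A\bfo)_i + \tfrac12(A^\top\bfo)_i = (A\bfo)_i$, using $A\bfo=A^\top\bfo$ exactly once. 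In effect you are replacing $A$ by its symmetrization $\tfrac12(A+A^\top)$, which lies in $\cA(S)$ and has the same row sums, and then reading off the edge weights. This is shorter, self-contained, and avoids the cited decomposition lemma entirely; the paper's approach, by contrast, exposes more structure (the cycle decomposition of $\cA(S)$), which may be useful elsewhere but is overkill for the statement at hand.
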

\vspace{.2cm}

\begin{proof}
We prove the result by using double-inclusion: 

\vspace{.1cm}

\noindent
{\em 1. Proof that $\cX(S)\subseteq \cA(S)\bfo$.} 
We show that for each generator $z_j$ of $\cX(S)$ as in~\eqref{eq:defZS}, there exists an $A\in \cA(S)$ such that $z_j = A\bfo$.  
If $j\in \cI_0$, then $f_j$ is a loop on some node $u_i$. 
Let $A_j:=e_ie_i^\top\in \cA(S)$; then, $A_j\bfo = z_j$.  
If $j\in \cI_1$, then $f_j = (u_k,u_\ell)$ is an edge between two distinct nodes.  
Let $A_j := \frac{1}{2} (e_k e_\ell^\top + e_\ell e_k^\top)\in \cA(S)$; then, $A_j\bfo = z_j$. 
\vspace{.1cm}

\noindent
{\em 2. Proof that $\cX(S)\supseteq\cA(S)\bfo$.} 
Let $A\in \cA(S)$, and we show that $A\bfo \in \cX(S)$. By Definition~\ref{def:CaS}, $A\bfo$ belongs to the standard simplex. Thus, it suffices to show that $A\bfo$ can be written as a nonnegative combination  of the $z_j$'s; indeed, if this holds, then it has to be a convex combination of the $z_j$'s and, hence, $A\bfo \in \cX(S)$. 

Decompose $A=:A_0 + A_1$ where $A_0$ (resp. $A_1$) is the diagonal (resp. off-diagonal) part of $A$. Then, $A\bfo = A_0\bfo + A_1\bfo$. We show that both $A_0\bfo$ and $A_1\bfo$ can be written as nonnegative combinations of $z_j$'s.  

For $A_0\bfo$, note that if the $ii$th entry of $A_0$ is not $0$, then~$u_i$ has a self-loop, say $f_j$. Thus, we obtain that $A_0\bfo$ can be expressed as a nonnegative combination of $z_j$'s, for $j \in \cI_0$. 

For $A_1\bfo$, we translate the problem into a problem about decompositions of infinitesimally doubly stochastic matrices into Laplacian matrices of cycles. First, since $A_1\bfo = A_1^\top\bfo$, {\em replacing} the diagonal entries of $A_1$ with the entries of $-A_1\bfo$ results in an infinitesimally {\em doubly} stochastic matrix. We denote it by $A'_1$ (i.e., $A'_1:=A_1-\Diag(A_1\bfo)$).

Now, consider the directed version of $S$, denoted by $\vec S$. For each directed cycle $\vec C_k$ of $\vec S$, other than self-loops, we let $L'_k$ be the associated Laplacian matrix. 
It is known that $A'_1$ can be expressed as a nonnegative combination of these $L'_k$~\cite[Proposition~3]{chen2016distributed} (the statement can be viewed as an infinitesimal version of the Birkhoff Theorem~\cite{birkhoff1946tres} for doubly stochastic matrices). 
In particular, the diagonal of $A'_1$ (which is $-A_1\bfo$) is a nonnegative combination of the diagonals of $L'_k$. 
Hence, it remains to show that the diagonal of each $L'_k$ can be written as a nonpositive combination of the $z_j$'s, for $j\in \cI_1$. 
Let $j_1,\ldots, j_m$ be the indices in $\cI_1$ that correspond to the undirected versions of the edges of $\vec C$. Then, 
$-\diag(L'_k) = \sum^m_{\ell = 1} z_{j_\ell}$.  This completes the proof.
\end{proof}

\vspace{.1cm}

\subsection{Proof of Theorem~\ref{thm:main}}
Let $G_n\sim W$ and $x$ be the associated empirical concentration vector. We will address subsequently the two conditions (1) $x^* \notin \cX(S)$ and (2) $S$ having no odd cycle: 
\vspace{.1cm}

\noindent
{\em Condition (1): $x^* \notin \cX(S)$}. 
Since $\cX(S)$ is closed, if $x^*\notin \cX(S)$, then there is an open neighborhood $\cU$ of $x^*$ in the standard simplex such that $\cU \cap \cX(S) = \varnothing$. 
On the one hand, by~\eqref{eq:oldlemma1}, the probability that $x$ belongs to $\cU$ tends to $1$ as $n$ goes to infinity. 
On the other hand, if $\vec G_n$ admits a Hamiltonian decomposition $\vec H$, then by Lemma~\ref{lem:phiHinconvS}, $A:=\rho(\vec H)\in \cA(S)$ and $x = A\bfo\in \cX(S)$. The above arguments imply that if $x^* \notin \cX(S)$, then~\eqref{eq:nonHproperty} holds.   
\vspace{.1cm}

\noindent
{\em Condition (2): $S$ has no odd cycle}.
In this case, by the definition of $\cX(S)$ in~\eqref{eq:defXS} and Proposition~\ref{prop:rankZSevenodd}, the co-dimension of $\cX(S)$ is~$1$ in the standard simplex. We introduce the random variable $\omega_n:=\sqrt{n} (x-x^*) + x^*$. Since $\mathbb{E}x= x^*$, it is known~\cite{arenbaev1977asymptotic} that $\omega_n$ converges in law to a Gaussian random variable~$\omega$ with mean $x^*$ and  covariance $\Sigma:=\Diag(x^*) - x^*x^{*\top}$. A short calculation yields that $\Sigma\bfo=0$ and that $\Sigma$ has rank $(q-1)$ (one could see this by, e.g., relating it to a weighted Laplacian matrix of a complete graph). 
Hence, the support of $\omega$ is the affine hyperplane $Q:=\{x^* + v\mid v^\top \bfo = 0\}$. 
Next, let $Q'\subsetneq Q$ be the smallest affine hyperplane containing $\cX(S)$, i.e., $Q':=\{ \sum_{i = 1}^{|F|} \lambda_i z_i \mid \sum^{|F|}_{i = 1} \lambda_i = 1 \}$. Its co-dimension in $Q$ is~$1$, so $\mathbb{P}(\omega\in Q') = 0$.  
Since $\omega_n$ converges in law to $\omega$, $\lim_{n\to\infty}\mathbb{P}(\omega_n\in Q') = 0$. 
We conclude the proof by noting that the event $\omega_n \in Q'$ is necessary for $x\in \cX(S)$ and, hence, by Lemma~\ref{lem:phiHinconvS}, necessary for $G_n$ to have a Hamiltonian decomposition. 
\hfill\qed
 
\section{Numerical Validations}\label{sec:numvalidation}

\begin{figure}
    \centering
    \subfloat[\label{sfig1:stepgraphon1}]{
\includegraphics{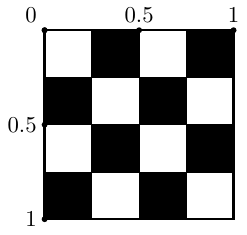}
} 
\subfloat[\label{sfig1:stepgraphon2}]{
\includegraphics{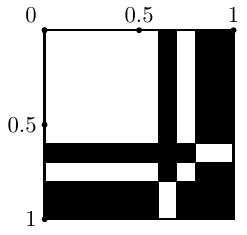}
}

\subfloat[\label{sfig1:stepgraphon3}]{
\includegraphics{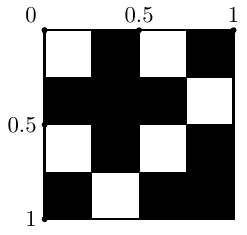}
}
\subfloat[\label{sfig1:stepgraphon4}]{
\includegraphics{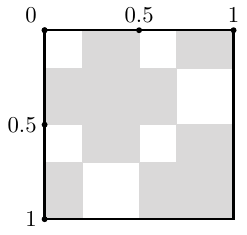}
}

\subfloat[\label{sfig1:skeleton1}]{
 \includegraphics{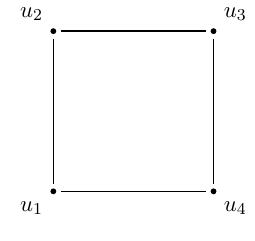}
}
\subfloat[][\label{sfig1:skeleton2}]{
 \includegraphics{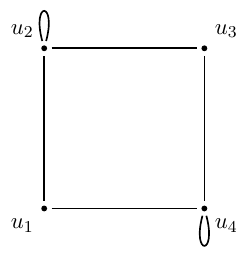}
}

\caption{The step-graphon depicted in Fig.~\ref{sfig1:stepgraphon1} has the skeleton graph shown in Fig.~\ref{sfig1:skeleton1}; the step-graphons depicted in Figs.~\ref{sfig1:stepgraphon2},~\ref{sfig1:stepgraphon3},~\ref{sfig1:stepgraphon4} have the skeleton graph shown in Fig.~\ref{sfig1:skeleton2}. The value of $W(s,t)$ is color-coded, with black being $1$ and white being $0$. The gray value in step-graphon~\ref{sfig1:stepgraphon4} corresponds to $0.2$. For each of these step-graphons, we sampled $N=2\cdot 10^4$ graphs $G_n$ for various $n$ and evaluated whether $\vec G_n$ have a Hamiltonian decomposition. The results are shown in Fig.~\ref{fig:simresult}.}\label{fig:stepgraphon2}
\end{figure}

We performed numerical studies to understand how rapidly the asymptotic regime appears as $n$ grows larger. The simulation results can also be understood as a validation of our main theorem and the claims made in the paper. The set-up is the following: we consider the four step-graphons depicted in Fig.~\ref{fig:stepgraphon2}. For each step-graphon, we sampled sets of $N=2 \cdot 10^4$ graphs $G_n$ for each $n \in \{10,20,50,100,250,500,1000\}$ and evaluated the proportion of $\vec G_n$ that have a Hamiltonian decomposition. Namely, we evaluated
$$
P_H := \frac{\mbox{\#} \vec G_n \mbox{ with Hamiltonian decomposition}}{2 \cdot 10^4}.
$$
Below are the observations from the experiments:
\vspace{.1cm}

\noindent
{\em Experiment (a)}: The step-graphon shown in Fig.~\ref{sfig1:stepgraphon1} has associated concentration vector $x^*=[0.25,0.25,0.25,0.25]$. Its skeleton graph $S$, shown in Fig.~\ref{sfig1:skeleton1}, does not have an odd cycle, but $x^*\in \cX(S)$.  
We observe in Fig.~\ref{fig:simresult} that the proportion of $\vec G_n \sim W$ that contains a Hamiltonian decomposition goes to zero as $n \to \infty$.
\vspace{.1cm}

\noindent
{\em Experiment (b)}: 
The step-graphon shown in Fig.~\ref{sfig1:stepgraphon2} has associated concentration vector $x^*=[0.6,0.1,0.1,0.2]$. The skeleton graph $S$, shown in Fig.~\ref{sfig1:skeleton2}, has an odd cycle. However, $x^* \notin \cX(S)$. We observe in Fig.~\ref{fig:simresult} that the proportion of $\vec G_n \sim W$ that contains a Hamiltonian decomposition goes to zero as $n \to \infty$.
\vspace{.1cm}

\noindent
{\em Experiment (c)}: 
The step-graphon shown in Fig.~\ref{sfig1:stepgraphon3} has associated concentration vector $x^*=[0.25,0.25,0.25,0.25]$. The skeleton graph $S$, shown in Fig.~\ref{sfig1:skeleton2}, has an odd cycle. One can check that $x^* \in \partial \cX(S)$, i.e., the {\em boundary} of $\cX(S)$.  We observe in Fig.~\ref{fig:simresult} that the proportion of $\vec G_n \sim W$ that contains a Hamiltonian decomposition does not vanish as $n \to \infty$ nor goes to $1$. Note that the class of step-graphons such that $x^* \in \partial \cX(S)$ is not generic. 
\vspace{.1cm}

\noindent
{\em Experiment (d)}: 
The step-graphon shown in Fig.~\ref{sfig1:stepgraphon4} has associated concentration vector $x^*=[0.25,0.25,0.25,0.25]$. The skeleton graph $S$, shown in Fig.~\ref{sfig1:skeleton2}, has an odd cycle. One can check that $x^* \in \rint \cX(S)$, the interior of $\cX(S)$.  We observe in Fig.~\ref{fig:simresult} that the proportion of $\vec G_n \sim W$ that contain a Hamiltonian decomposition converges to~$1$ as $n \to \infty$. 

\begin{figure}
    \centering
\includegraphics{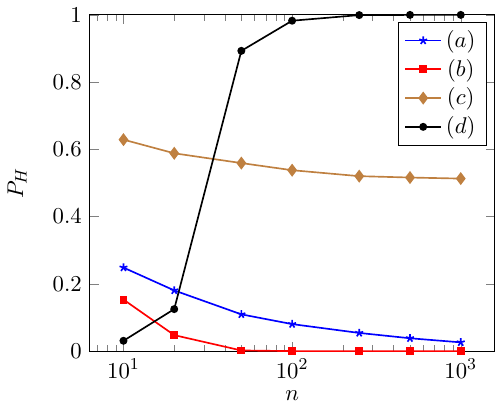}

    \caption{We plot the proportion $P_H$ of $\vec G_n$, with $G_n \sim W$, that have a Hamiltonian decomposition, for the four step-graphons depicted in Fig.~\ref{fig:stepgraphon2}.}
    \label{fig:simresult}
\end{figure}

\section{Conclusions}
We have exhibited two necessary conditions for the $H$-property to hold for the class of step-graphons $W$. The starting point of our analysis was the introduction of two novel objects associated with $W$: its concentration vector $x^*$ and its skeleton graph $S$. We have then  highlighted a novel connection between the edge polytope of $S$, denoted by $\cX(S)$, and the $H$-property for the underlying graphon $W$: it requires that $x^*\in \cX(S)$ and $\cX(S)$ is of maximal rank. We also validated our results via numerical studies in Sec.~\ref{sec:numvalidation}. As was claimed after Theorem~\ref{thm:main} and shown in Figure~\ref{fig:simresult}, the two conditions that  $x^*$ belongs to the interior of $\cX(S)$ and that $\cX(S)$ is of maximal rank are sufficient for a step-graphon $W$ to have the $H$-property.

\bibliographystyle{IEEEtran}      % Reference styling
\bibliography{refs.bib}       % File of generalized

\appendix
\section{Analysis and Proof of Proposition~\ref{prop:invarianceofskeleton}}
%\subsection{Refinements of partitions}\label{ssec:refinement}
We first have some preliminaries about {\em refinements} of partitions: given a partition sequence $\sigma$, a {\em refinement} $\sigma'$ of $\sigma$, denoted by $\sigma\prec \sigma'$, is any sequence that has $\sigma$ as a proper subsequence. For example, $\sigma' = (0,1/2,3/4,1)$ is a refinement of $\sigma = (0,1/2,1)$. 
Given a step-graphon $W$, if $\sigma$ is a partition for $W$, then so is $\sigma'$.

We say that $\sigma'$ is a {\em one-step refinement} of $\sigma$ if it is a refinement with $|\sigma'|=|\sigma|+1$. Any refinement of $\sigma$ can be obtained by iterating one-step refinements. To fix ideas,  and without loss of generality, we consider the refinement of  $\sigma=(\sigma_0,\ldots, \sigma_q,\sigma_*)$ to $\sigma'=(\sigma_0,\ldots, \sigma_q,\sigma_{q+1}, \sigma_*)$ with $\sigma_{q} < \sigma_{q+1} < \sigma_*$. 
If $S = (U,F)$, then $S'=(U',F')$, the skeleton graph of $W$ for $\sigma'$, is given by
\begin{equation}\label{eq:splitnode}
\left\{
\begin{aligned}    
    U' &= U \cup \{ u_{q+1}\}, \\
    F' &= F \cup \{ (u_i, u_{q+1}) \mid (u_i,u_q)\in F \} \cup \{(u_{q+1}, u_{q+1}) \mbox{ if } (u_q, u_q)\in F\}.
\end{aligned}
\right.
\end{equation}
In essence, the node $u_{q+1}$ is a copy of the node $u_q$. If there is a  loop $(u_q,u_q)$ in $F$, then $u_q$ and $u_{q+1}$ are also connected and each has a self-loop. See Fig.~\ref{fig:split} for illustration. 
We say that a one-step refinement {\em splits a node} (here, $u_q$).

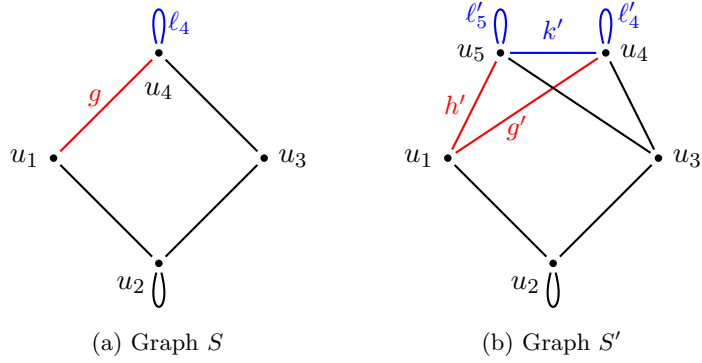
\begin{figure}
    \centering
    \subfloat[\label{sfig1:beforesplit}][Graph $S$]{

 \begin{tikzpicture}[scale=1.4]
    \tikzset{every loop/.style={}}
		\node [circle,fill=black,inner sep=1pt,label=left:{$u_1$}] (1) at (-1, 0) {};
		\node [circle,fill=black,inner sep=1pt,label=below left:{$u_2$}] (2) at (0, -1) {};
		\node [circle,fill=black,inner sep=1pt,label=right:{$u_3$}] (3) at (1, 0) {};
		\node [circle,fill=black,inner sep=1pt,label={[label distance=.2cm]-90:$u_4$}] (4) at (0, 1) {};

  \path[draw,thick,shorten >=2pt,shorten <=2pt]
		 (2) edge[loop below] (2)
		 (4) edge[loop above, blue] (4) node[  label={[font=\small,label distance=0.1cm,text=blue, xshift=-.25cm]25:$\ell_4$}] {}
		 (1) edge[black] (2)
		 (1) edge[red] (4) node[  label={[font=\small,label distance=0.5cm,text=red, xshift=-.07cm]55:$g$}] {}
		 (2) edge[black] (3)
		 (3) edge[black] (4)
		 ;
\end{tikzpicture}

}
\qquad
\subfloat[\label{sfig1:aftersplit}][Graph $S'$]{
   \begin{tikzpicture}[scale=1.4]
    \tikzset{every loop/.style={}}
 \tikzset{every loop/.style={}}
		\node [circle,fill=black,inner sep=1pt,label=left:{$u_1$}] (1) at (-1, 0) {};
		\node [circle,fill=black,inner sep=1pt,label=below left:{$u_2$}] (2) at (0, -1) {};
		\node [circle,fill=black,inner sep=1pt,label=right:{$u_3$}] (3) at (1, 0) {};
		\node [circle,fill=black,inner sep=1pt,label=right:{$u_4$}] (4) at (0.5, 1) {};
		\node [circle,fill=black,inner sep=1pt,label=left:{$u_5$}] (5) at (-0.5, 1) {};

  \path[draw,thick,shorten >=2pt,shorten <=2pt]
		 (2) edge[loop below] (2)
		 (4) edge[loop above,blue] (4) node[label={[font=\small,label distance=0.1cm,text=blue, xshift=-.2cm]40:$\ell'_4$}] {}
		 (5) edge[loop above,blue] (5) node[label={[font=\small,label distance=0.1cm,text=blue, xshift=.2cm]140:$\ell'_5$}] {}
		 (1) edge[black] (2)
		 (1) edge[red] (4) node[label={[font=\small,label distance=0.4cm,text=red,xshift=.1cm]10:$g'$}] {}
		 (1) edge[red] (5) node[  label={[font=\small,label distance=0.3cm,text=red, xshift=.1cm]90:$h'$}] {}
		 (2) edge[black] (3)
		 (3) edge[black] (4)
		 (3) edge[black] (5)
		 (5) edge[blue] (4) node[label={[font=\small,xshift=.7cm,yshift=-.1cm,text=blue]:$k'$}] {}
		 ;
\end{tikzpicture} 
}
\caption{ The graph $S'$ on the right is obtained from the left $S$ via a one-step refinement. The node $u_4$ in $S$ is split into $u_4$ and $u_5$. 
Because  $g=(u_1,u_4)$ is an edge in $S$, there exist two edges $g'=(u_1,u_4)$ and $h'=(u_1,u_5)$ in $S'$. Because $u_4$ has a self-loop $\ell_4$ in $S$, both $u_4$ and $u_5$ have self-loops in $S'$, denoted by $\ell'_4$ and $\ell'_5$, respectively. In addition, we have the edge $k'=(u_4,u_5)$ in $S'$.}
    \label{fig:split}
\end{figure}

We now prove Proposition~\ref{prop:invarianceofskeleton}: 

\begin{proof}[Proof of Proposition~\ref{prop:invarianceofskeleton}] 
Let $\sigma$ and $\sigma'$ be as given in the statement of the proposition. 
It should be clear that there exists another partition $\sigma''$ which is a refinement of both $\sigma'$ and $\sigma$ and that $\sigma''$ can be obtained via a sequence of one-step refinements starting with either $\sigma'$ or $\sigma$. Thus, combining the arguments at the beginning of the section, we can assume, without loss of generality, that $\sigma'$ is a {\em one-step} refinement of $\sigma$ obtained by splitting the node $u_1 \in U$. 

Let $x^*$ and $x'^*$ be the concentration vectors for $\sigma$ and $\sigma'$, $S$ and $S'$ be the corresponding skeleton graphs, and $Z$ and $Z'$ be the corresponding incidence matrices. 
Note that $Z'$ has one more row than $Z$ does due to the addition of the new node $u_{q+1}$; here, we let the last row of $Z'$ correspond to that node. It should be clear that $Z'$ contains $Z$ as a submatrix. 
For clarity of presentation, we use $f$ (resp. $f'$) to denote edges of $S$ (resp. $S'$). Since the graph $S$ can be realized as a subgraph of $S'$ in a natural way, we will write on occasion $f'\in F$ if $f'$ is an edge of $S$.  

We now prove the invariance of each item listed in the statement of Proposition~\ref{prop:invarianceofskeleton} under one-step refinements.
The proofs of the first two items are direct consequence of the definition of one-step refinement.
\vspace{.2cm}

\noindent {\bf Proof for item (1).} If $S$ is connected, then  from~\eqref{eq:splitnode} we obtain that there exists a path from any node $u_i \in F$ to the new node $u_{q+1}$, so $S'$ is also connected. Reciprocally, assume that $S$ has at least two connected components. 
Then, the node $u_{q+1}$ obtained by splitting $u_q$ will only be connected to nodes in the same component as $u_q$ by definition of $F'$. 
\vspace{.2cm}

\noindent {\bf Proof for item (2).} If $S$ has an odd cycle, then so does $S'$ by~\eqref{eq:splitnode}. Reciprocally, we assume that $S$ is lacking an odd cycle. 
%Then, it is clear that $S'$ does not have a self-loop. 
We show that $S'$ has no odd cycle. Suppose, to the contrary, that it does.  The cycle must then contain the node $u_{q+1}$. Replacing $u_{q+1}$ with $u_q$ yields a closed walk of odd length in $S$.  Since a closed walk can be decomposed edge-wise into a union of cycles and since the length of the walk is the sum of the lengths of the constituent cycles, there must exist an odd cycle in $S$, which is a contradiction.
\vspace{.2cm}

\noindent {\bf Proof for item (3)} We prove each direction of the statement separately:
\vspace{.1cm}

\noindent{\bf Part 1: $x\in \cX(S) \Rightarrow x'\in \cX(S')$ ($x\in \rint \cX(S) \Rightarrow x'\in \rint \cX(S')$). }
For ease of presentation, we let $z_f$ (resp. $z'_{f'}$) be the edge of $Z$ (resp. $Z'$) corresponding to the element $f\in F$ (resp. $f'\in F'$), and $z_{f,i}$ be the $i$th entry of $z_f$. 
Because $\cX(S)$ is the convex hull of the columns of $Z$,  there exist coefficients $c_f\geq 0$, for $f\in F$, such that $x= \sum_{f\in F} c_f z_f$. 
If, further, $x\in \rint \cX(S)$, then these coefficients can be chosen to be strictly positive. 
We will use $c_f$ to construct $c'_{f'} \ge 0$, for $f' \in F'$, such that 
\begin{equation}\label{eq:x'c'f'z'}
x'=\sum_{f' \in F'} c'_{f'} z'_{f'}
\end{equation}
and show that $x'\in \rint \cX(S')$ if $x\in \rint \cX(S)$.  

To proceed, let $F_{u_q}$ be the set of edges incident to node $u_q$ in $S$. Similarly, let $F'_{u_q}$ and $F'_{u_{q+1}}$ be the sets of edges incident to $u_q$ and $u_{q+1}$ in $S'$, respectively. 
The coefficients $c'_{f'}$ are defined as follows:
\begin{enumerate}
    \item[(a)] If $f' \notin F'_{u_q} \cup F'_{u_{q+1}}$, then $f'\in F$. Let $c'_{f'}:=c_{f'}$. 
    \item[(b)] If $f'\in F'_{u_q}$ and $f' \neq (u_q,u_{q+1})$, then $f'\in F$. Let $c'_{f'} := \frac{\sigma_{q+1} - \sigma_q}{\sigma_* -\sigma_q} c_{f'}$.
    
    \item[(c)] If $f'= (u_i, u_{q+1})$ and $u_i \neq u_q$, then we pick the $f \in F$ such that 
    $$f =\begin{cases} (u_i, u_q) & \mbox{ if }u_i \neq u_{q+1}, \\
    (u_q,u_q) & \mbox{ if } u_i = u_{q+1}.
    \end{cases}
    $$ 
   Let $c'_{f'} := \frac{\sigma_* - \sigma_{q+1}}{\sigma_* -\sigma_q} c_{f}$. 
    
    \item[(d)] If $f' = (u_q,u_{q+1})$, then let $c'_{f'}:=0$.
\end{enumerate}

%% * -> q+1, 0 -> q, 1 -> * for sigma

With the coefficients as above, we prove entry-wise that~\eqref{eq:x'c'f'z'} holds. 
First, note that because we obtain $S'$ by splitting the last node $u_q$ of $S$, the $i$th entry of $x'$, for $1\leq i\leq q-1$, is equal to $x_i$, so $x'_{i}=x_i = (\sigma_{i} - \sigma_{i-1})$. For the $i$th entry of the right hand side of~\eqref{eq:x'c'f'z'}, we consider two cases: 
\vspace{.1cm}

\noindent
{\em Case 1: $u_{i}$ is not incident to $u_q$ in $S$.} In this case, $u_i$ is not incident to either $u_q$ or $u_{q+1}$ in $S'$. Consequently, $F'_{u_{i}} = F_{u_{i}}$ and $z'_{f,i} = z_{f,i}$ for all $f\in F_{u_i}$. Furthermore, by item (a), $c'_{f} = c_f$ for any $f\in F_{u_{i}}$.  
Thus, the $i$th entry of the right hand side of~\eqref{eq:x'c'f'z'} is given by
$$
\sum_{f'\in F'_{u_{i}}} c'_{f'} z'_{f',i} = \sum_{f\in F_{u_{i}}}  c_f z_{f,i} = x_{i}  = \sigma_{i} - \sigma_{i-1}.
$$
\vspace{.1cm}

\noindent
{\em Case 2: $u_{i}$ is incident to $u_q$ in $S$.} In this case, $u_i$ is incident to both $u_q$ and $u_{q+1}$ in~$S'$. 
Let $g':= (u_i,u_q)$ and $h':= (u_i,u_{q+1})$ be the corresponding edges in~$S'$, see Fig.~\ref{fig:split} for an illustration. %and 
%$\tilde F'_{u_{i}}:= F'_{u_{i}}- \{g', h'\}$. 
Then, the $i$th entry of the right hand side of~\eqref{eq:x'c'f'z'} is given by
\begin{equation}\label{eq:ithentryofx'}
\sum_{f'\in F'_{u_{i}}} c'_{f'} z'_{f',i}  = c'_{g'} z'_{g',i} + c'_{h'} z'_{h',i}+ \sum_{f'\in F'_{u_i} - \{g', h'\}} c'_{f'} z'_{f',i}.
\end{equation}
By items (b) and (c), 
$$c'_{g'} = \frac{\sigma_{q+1} - \sigma_q}{\sigma_* - \sigma_q} c_{g'} \quad \mbox{and}  \quad c'_{h'} = \frac{\sigma_* - \sigma_{q+1}}{\sigma_* - \sigma_q} c_{h'}.$$ 
Also, note that  
$$z'_{g',i}=z'_{h',i}= z_{g',i} = \frac{1}{2}.$$
Thus, the sum of the first two terms on the right hand side of~\eqref{eq:ithentryofx'} is $c_{g'} z_{g',i}$. For the last term,  note that $F'_{u_i} - \{g',h'\}=F_{u_i} - \{g'\}$. Also, by item (a) and the fact that $z'_{f,i} = z_{f,i}$ for any $f\in F_{u_i}$,  
$$
\sum_{f'\in F'_{u_i} - \{g',h'\}} c'_{f'} z'_{f',i} = \sum_{f\in F_{u_i} -\{g'\}} c_{f} z_{f,i}. 
$$
Combining the above arguments, we have that the right hand side of~\eqref{eq:ithentryofx'} is given by
$$
\sum_{f\in F_{u_{i}}} c_f z_{f,i} = x_{i} = \sigma_{i} - \sigma_{i-1}.
$$

Next, the $q$th entry of $x'$ is $(\sigma_{q+1} - \sigma_q)$ and the $q$th entry of the right hand side of~\eqref{eq:x'c'f'z'} is 
$$
\sum_{f'\in F'_{u_q}} c'_{f'} z'_{f',q} = \frac{\sigma_{q+1} - \sigma_q}{\sigma_* -\sigma_q}\sum_{f\in F_{u_q}}  c_f z_{f,q} = \frac{\sigma_{q+1} - \sigma_q}{\sigma_* -\sigma_q} x_q =   \sigma_{q+1} -\sigma_q,
$$
where the first equality follows from the fact that 
$$F'_{u_q} = F_{u_q} \cup \{(u_q,u_{q+1}) \mbox{ if } (u_q,u_q)\in F \},$$ 
items (b) and (d), and the last equality follows from the fact that $x_q = \sigma_{*} - \sigma_q$.
%the fact that $c'_{f'} = 0$ for $f' = (u_1,u_*)$. 
%The second equality is due to the fact that $\sum_{f\in F_{u_1}}  c_f z_{f,1} = \sigma_1-\sigma_0$.

The last (i.e., $(q+1)$th) entry of $x'$ is $(\sigma_* - \sigma_{q+1})$. The last entry of the right hand side of~\eqref{eq:x'c'f'z'} is given by 
$$
\sum_{f'\in F'_{u_{q+1}}} c'_{f'} z'_{f',q+1} = \frac{\sigma_* - \sigma_{q+1}}{\sigma_* -\sigma_q}\sum_{f\in F_{u_q}} c_f z_{f,q} = \frac{\sigma_* - \sigma_{q+1}}{\sigma_* -\sigma_q} x_q = \sigma_* -\sigma_{q+1},
$$
where the first equality follows from item (c) above.  
We have thus shown that Eq.~\eqref{eq:x'c'f'z'} holds. In particular, since $c'_{f'}$ are nonnegative by construction, Eq.~\eqref{eq:x'c'f'z'} implies that $x'\in \cX(S')$.

It now remains to show that if $x \in \rint \cX(S)$, then $x' \in \rint \cX(S')$. Assuming $x \in \rint \cX(S)$,  if $u_q$ does not have a self-loop in $S$, then the edge $(u_q,u_{q+1})$ does not exist in $S'$, so by items (a), (b), and (c), all coefficients $c'_{f'}$ are positive, which implies that $x' \in \rint \cX(S')$.

We now assume that $u_q$ has a self-loop in $S$. 
Then, $k':= (u_q,u_{q+1})$ is an edge in $S'$ (see Fig.~\ref{fig:split} for an illustration), and thus $c'_{k'}=0$ per item (d) above. 
In this case, both $u_q$ and $u_{q+1}$ have self-loops in $S'$. Denote these two self-loops by $\ell'_{q} := (u_q,u_q)$ and $\ell'_{q+1} := (u_{q+1}, u_{q+1})$. 
By~\eqref{eq:defZS}, we have that 
$$z'_{k'}= \frac{1}{2} (z'_{\ell'_{q}} +  z'_{\ell'_{q+1}}).$$ 
Since $c'_{\ell'_{q}}$ and $c'_{\ell'_{q+1}}$ are positive, there exists an $\epsilon> 0$ such that $\epsilon < c'_{\ell'_{q}}$ and $\epsilon < c'_{\ell'_{q+1}}$. It then follows that
\begin{equation}\label{eq:manyprimes}
c'_{\ell'_{q}} z'_{\ell'_{q}} + c'_{\ell'_{q+1}} z'_{\ell'_{q+1}} = 2\epsilon z'_{k'} + (c'_{\ell'_{q}} - \epsilon) z'_{\ell'_{q}} + (c'_{\ell'_{q+1}} - \epsilon) z'_{\ell'_{q+1}}.
\end{equation}
Plugging in~\eqref{eq:x'c'f'z'} the relation~\eqref{eq:manyprimes} shows that $x'$ can be written as a convex combination of the $z'_{f'}$, for $f'\in F'$, with all {\em positive} coefficients,  and thus  $x' \in \rint \cX(S')$.
\vspace{.1cm}

\noindent{\bf Part 2: $x'\in \cX(S')\Rightarrow  x\in \cX(S)$ ($x'\in \rint \cX(S')\Rightarrow  x\in \rint \cX(S)$). } 
Because $x'\in \cX(S')$ (resp. $x'\in \rint \cX(S')$), we can write $x' = \sum_{f'\in F'}c'_{f'} z'_{f'}$, with $c'_{f'} \geq 0$ (resp. $c'_{f'} > 0$), for all $f'\in F'$. We will use $c'_{f'}$ to construct $c_{f}$, for $f\in F$, so that 
\begin{equation}\label{eq:xczf}
x = \sum_{f\in F} c_f z_f.
\end{equation}
%Note that if the above equation holds, then $x\in \rint \cX(S)$. 
To this end, we define $c_f$ as follows:
\begin{enumerate}
    \item[(e)] If $f$ is not incident to $u_q$ in $S$, 
    then let $c_f := c'_f$. 
    \item[(f)] If $f = (u_i,u_q)$ and $u_i\neq u_q$, then $g':=(u_i,u_q)$ and $h':=(u_i,u_{q+1})$ are edges in $S'$, and let $c_f := c'_{g'} + c'_{h'}$.
    \item[(g)] If $f = (u_q,u_q)$, then  $k':=(u_q,u_{q+1})$, $\ell'_{q} := (u_q,u_q)$, and $\ell'_{q+1} := (u_{q+1},u_{q+1})$ are edges in $S'$, and let $  c_f := c'_{k'}+ c'_{\ell'_{q}}+c'_{\ell'_{q+1}}$. 
\end{enumerate}
Note that all the coefficients $c_f$, for $f\in F$, defined above are nonnegative. Further, if all the $c'_{f'}$ are positive, i.e., $x'\in \rint \cX(S')$, then the $c_{f}$ are positive as well, which implies $x\in \rint \cX(S)$ provided that~\eqref{eq:xczf} holds.

We now show that the coefficients given above are such that~\eqref{eq:xczf} indeed holds. We do so by checking that~\eqref{eq:xczf}  holds for each entry.

For the $i$th entry, with $1 \leq i <q$, the left hand side of~\eqref{eq:xczf} is $x_i = (\sigma_i - \sigma_{i-1})$. For the right-hand side, if $(u_i,u_q)$ is an edge in $S$, then $g'$ and $h'$, as defined item~(f), are two edges in $S'$ and, consequently,  $F'_{u_i} = F_{u_i} \cup \{h'\}$.  
Note that $z_{f,i} = z'_{f,i}$ for all $f\in F_{u_i}$ and $$z_{g',i} = z'_{g',i} = z'_{h',i} = \frac{1}{2}.$$
Thus, by items (e) and (f), we have that
\begin{align*}
\sum_{f \in F} c_f z_{f,i} & =  c_{g'} z_{g',i} + \sum_{f \in F_{u_i}-\{g'\} }c_{f}z_{f,i} \\
& = c'_{g'} z'_{g',i} + c'_{h'} z'_{h',i} + \sum_{f'\in F'_{u_i} - \{g', h'\}} c'_{f'} z'_{f',i} \\
& = \sum_{f'\in F'_{u_i}} c'_{f'} z'_{f',i} = x'_{i} = (\sigma_{i} - \sigma_{i-1}).
\end{align*}

Finally, for the last entry, i.e., the $q$th entry, the left hand side of~\eqref{eq:xczf} is $x_q= \sigma_*-\sigma_q$. 
For the right hand side of~\eqref{eq:xczf}, we let $\ell_{q} := (u_q,u_q)$ be the loop on $u_q$ (if it exists in $S$) and thus have that
\begin{equation}\label{eq:firstentry}
\sum_{f\in F_{u_q}} c_f z_{f,q}  = c_{\ell_q} z_{\ell_{q},q} + \sum_{f\in F_{u_q} - \{\ell_{q}\}} c_f z_{f,q}. 
\end{equation}
Let $k'$, $\ell'_q$, and $\ell'_{q+1}$
be the three edges in $S'$ as defined in item~(g).  
Note that 
$$z_{\ell_{q},q} = z'_{\ell'_{q},q}=z'_{\ell'_{q+1},q+1}=2z'_{k',q} = 2z'_{k',q+1} = 1.$$
For the first term of~\eqref{eq:firstentry}, using item (g) and the above relations, we  obtain 
\begin{equation}\label{eq:cf11zf11}
c_{\ell_{q}} z_{\ell_{q},q} = c'_{\ell'_{q}} z'_{\ell'_{q},q} + c'_{\ell'_{q+1}}z'_{\ell'_{q+1},q+1} +c'_{k'}z'_{k',q} + c'_{k'}z'_{k',q+1}.
\end{equation}
For each addend in the second term of~\eqref{eq:firstentry}, the edge $g  = (u_i,u_q)$ in $S$, for some $u_i\neq u_q$,  has two corresponding edges in $S'$, namely  
$g' = (u_i,u_q)$ and 
$h' = (u_i,u_{q+1})$. 
Note that
$$
z_{g,q} = z'_{g',q} = z'_{h',q+1} = \frac{1}{2}.
$$
Then, by item (f), 
\begin{equation}\label{eq:cf1izf1i}
c_{g} z_{g,q} = c'_{g'} z'_{g',q} + c'_{h'}z'_{h',q+1}.  
\end{equation}
Combining~\eqref{eq:cf11zf11} and~\eqref{eq:cf1izf1i}, we obtain that
\begin{align}
\sum_{f\in F_{u_q}} c_f z_{f,1} & = \sum_{f'\in F'_{u_q}} c'_{f'} z'_{f',q}  + 
\sum_{f'\in F'_{u_{q+1}}} c'_{f'} z'_{f',q+1}\notag \\ 
& = x'_q + x'_{q+1} = (\sigma_{q+1} - \sigma_q) + (\sigma_* - \sigma_{q+1}) = \sigma_* - \sigma_q.  \notag
\end{align}
This concludes the proof.% \hfill\qed 
\end{proof}

\end{document}